\newtheorem{theorem}{Theorem}[section]
\newtheorem{corollary}[theorem]{Corollary}
\newtheorem{lemma}[theorem]{Lemma}
\newcommand*\diff{\mathop{}\!\mathrm{d}}
\numberwithin{equation}{section} \makeatletter
\date{} 
\title{On Visser's inequality concerning coefficient estimates for a polynomial}
\author{ S. Gulzar$^\dagger,$ N.A. Rather$^\ddagger$ \& M.S. Wani$^\ddagger$} 
\begin{document}
\maketitle
\begin{center} 
$^\dagger$Department of Mathematics,\\ Government College for Engineering and Technology, J\& K, India\\
$^\ddagger$Department of Mathematics, \\University of Kashmir, Srinagar-190006, India
\end{center} 
\begin{center}
e-mail:   sgmattoo@gmail.com
\end{center}
\footnotetext{\textbf{AMS Mathematics Subject Classification(2010)}: 30A10; 30C10.}
\footnotetext{\textbf{Keywords and phrases:}  Polynomials; Visser's inequality; Inequalities in the complex domain.}
\begin{abstract}
If $P(z)=\sum_{j=0}^{n}a_jz^j$ is a polynomial of degree $n$ having no zero in $|z|<1,$ then it was recently proved that for every $p\in[0,+\infty]$ and $s=0,1,\ldots,n-1,$
\begin{align*}
\left\|a_nz+\frac{a_s}{\binom{n}{s}}\right\|_{p}\leq \frac{\left\|z+\delta_{0s}\right\|_p}{\left\|1+z\right\|_p}\left\|P\right\|_{p},
\end{align*}
where $\delta_{0s}$ is the Kronecker delta. In this paper, we consider the class of polynomials having no zero in $|z|<\rho,$ $\rho\geq 1$ and obtain some generalizations of above inequality.
\end{abstract}
\section{Introduction}
Let $\mathcal{P}_n$ be the set of all polynomials of degree $n.$ For $P\in\mathcal{P}_n$ define
\begin{align*}
\|P\|_p&:=\left(\frac{1}{2\pi}\int_{0}^{2\pi}|P(e^{i\theta})|^p\diff\theta\right)^{1/p}\qquad (0<p<\infty),\\
&\|P\|_\infty:=\underset{|z|=1}{\max}|P(z)|\qquad\text{and}\\
\|P\|_0&:=\exp\left(\frac{1}{2\pi}\int_{0}^{2\pi}\log|P(e^{i\theta})|\diff\theta\right).
\end{align*}
If $P\in\mathcal{P}_n$ and $P(z)=\sum_{j=0}^{n}a_jz^j,$ then
\begin{align}\label{e1}
|a_n|+|a_0|\leq \|P\|_\infty.
\end{align} 
This inequality is due to Visser's inequality \cite{v} and it is well known that equality holds in \eqref{e1} only when $a_j=0$ for $j=1,2,\ldots,n-1.$ We refer the reader to the book of Rahman and Schmeisser \cite{rs} for a survey of extensions of Visser's inequality.

Recently, following extenstion of \eqref{e1} in integral norm was proved in \cite{sg}.
\begin{theorem}\label{t1}
If $P\in\mathcal{P}_n$ and $P(z)=\sum_{j=0}^{n}a_jz^j,$ then for $0\leq p< \infty,$
\begin{align}\label{t1e}
\left\|a_nz+\frac{a_s}{\binom{n}{s}}\right\|_{p}\leq \left\|P\right\|_p, \qquad s=0,1,\ldots,n-1.
\end{align}
\end{theorem}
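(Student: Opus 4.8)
The plan is to prove the base case $s=0$ directly by averaging $P$ over rotations, and then to reduce every other $s$ to it by differentiating $P$ exactly $s$ times. For the base case I would record the following statement for an arbitrary degree: if $Q(z)=\sum_{j=0}^m b_j z^j\in\mathcal{P}_m$, then $\|b_m z+b_0\|_p\le\|Q\|_p$. To see this, set $\omega=e^{2\pi i/m}$ and use the identity $\frac1m\sum_{k=0}^{m-1}Q(\omega^k z)=b_0+b_m z^m$, which holds because $\frac1m\sum_{k=0}^{m-1}\omega^{kj}$ equals $1$ for $j\in\{0,m\}$ and $0$ for the remaining $j$ in $0\le j\le m$. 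Since $z\mapsto z^m$ is measure-preserving on the unit circle we have $\|b_0+b_m z^m\|_p=\|b_0+b_m z\|_p$, so the base case is exactly the assertion that the averaging operator $Q\mapsto\frac1m\sum_k Q(\omega^k\cdot)$ does not increase the $\|\cdot\|_p$ norm.

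Granting the base case, the reduction is clean. The polynomial $P^{(s)}$ has degree $n-s$, leading coefficient $\frac{n!}{(n-s)!}a_n$ and constant term $s!\,a_s$, so the ratio of its top to bottom coefficient is $\binom{n}{s}$ times $a_n:a_s$, precisely the ratio carried by the target expression $a_n z+\frac{a_s}{\binom{n}{s}}$. Applying the base case to $Q=P^{(s)}$ and pulling the factor $\frac{n!}{(n-s)!}$ out of the norm, and using $s!\,\frac{(n-s)!}{n!}=\binom{n}{s}^{-1}$, gives $\bigl\|a_n z+\frac{a_s}{\binom{n}{s}}\bigr\|_p\le\frac{(n-s)!}{n!}\,\|P^{(s)}\|_p$. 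The stray constant is then absorbed by the iterated $L^p$ Bernstein inequality $\|P^{(s)}\|_p\le n(n-1)\cdots(n-s+1)\,\|P\|_p=\frac{n!}{(n-s)!}\|P\|_p$, and the two factors cancel to yield exactly $\bigl\|a_n z+\frac{a_s}{\binom{n}{s}}\bigr\|_p\le\|P\|_p$ for $s=1,\dots,n-1$; the case $s=0$ is the base case itself ($\binom{n}{0}=1$).

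The main obstacle is the range $0\le p<1$ (including $p=0$), where the triangle inequality is unavailable. For $p\ge 1$ both ingredients are routine: the averaging bound follows from Minkowski's inequality together with the rotation-invariance $\|Q(\omega^k\cdot)\|_p=\|Q\|_p$, and the $L^p$ Bernstein inequality is classical. For $0\le p<1$ the averaging operator is no longer visibly norm-nonincreasing (bare subadditivity of $\|\cdot\|_p^p$ only produces the useless larger constant $m^{1/p-1}$), and the Bernstein inequality ceases to be elementary. Both facts in this range I would justify by appealing to Arestov's theorem on $L^p$ norms of operators acting on polynomials, which validates the two displayed inequalities uniformly for every $p\in[0,\infty)$ and thereby completes the argument.
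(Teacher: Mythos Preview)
The paper does not itself prove this theorem; it quotes the result from \cite{sg}. The natural proof with the paper's own tools is a single application of Arestov's Lemma~\ref{l3}: take $C_{\boldsymbol\gamma}$ with $\gamma_n=1$, $\gamma_s=1/\binom{n}{s}$, and all other $\gamma_j=0$. If $P$ has all its zeros in $|z|\le 1$ then Vi\`ete's formula gives $|a_s/a_n|\le\binom{n}{s}$, so $z^s\bigl(a_nz^{n-s}+a_s/\binom{n}{s}\bigr)$ again has all zeros in $|z|\le 1$; thus $C_{\boldsymbol\gamma}$ is admissible with $c(\boldsymbol\gamma)=\max(|\gamma_0|,|\gamma_n|)=1$, and \eqref{l3e} is exactly the inequality claimed. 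This is essentially how \cite{sg} proceeds.

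Your route is correct but structured differently. You isolate the case $s=0$ via the roots-of-unity averaging identity, and then reduce every $s>0$ to it by applying the base case to $P^{(s)}$ and absorbing the factorial with the iterated $L^p$ Bernstein inequality. The gain is that for $p\ge 1$ the argument becomes completely elementary---Minkowski handles the averaging, and Bernstein is classical---whereas the one-shot Arestov argument is opaque in every range. The cost is that for $0\le p<1$ you end up invoking Arestov twice (once to justify the averaging contraction, once for Bernstein), while the direct approach uses it once; and both of those invocations are really special cases of the same admissible-operator computation that gives the theorem outright. So your decomposition buys transparency for $p\ge 1$ at the price of a slightly longer path for $p<1$, but there is no gap.
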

The case $s=n-1$ was independently obtained in \cite{sam}.

For the class of polynomials having no zero in $|z|<1,$ in \cite{sg} the following improvement of Theorem \ref{t1} was also obtained.
\begin{theorem}\label{t2}
If $P\in\mathcal{P}_n$ and $P(z)=\sum_{j=0}^{n}a_jz^j$ does not vanish in $|z|<1,$ then for $0\leq p<\infty,$
\begin{align*}
\left\|a_nz+\frac{a_s}{\binom{n}{s}}\right\|_{p}\leq c_p\left\|P\right\|_p, \qquad s=0,1,\ldots,n-1.
\end{align*}
\begin{equation}\label{cp}
c_p=
\begin{cases}
\quad\quad1\quad\quad\text{if}\quad s=0\\
~~\dfrac{1}{\|1+z\|_p}\quad ~\text{if}\quad s>0.
\end{cases}
\end{equation}
\end{theorem}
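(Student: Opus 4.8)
The plan is to reduce Theorem \ref{t2} to Theorem \ref{t1} applied to a derivative of $P$, combined with the $L^p$ form of the higher-order Erd\H{o}s--Lax inequality for polynomials non-vanishing in the unit disk. When $s=0$ one has $c_p=1$ and the assertion is literally Theorem \ref{t1} (the non-vanishing hypothesis is simply discarded), so I would assume $1\le s\le n-1$ throughout.

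First I would differentiate $s$ times. Since $a_n\neq 0$, the polynomial $P^{(s)}$ has degree exactly $n-s$; writing $P^{(s)}(z)=\sum_{i=0}^{n-s}b_iz^i$, its leading coefficient is $b_{n-s}=\frac{n!}{(n-s)!}a_n$ and its constant term is $b_0=s!\,a_s$. Applying Theorem \ref{t1} to $P^{(s)}\in\mathcal P_{n-s}$ with index $0$ (so that $\binom{n-s}{0}=1$) gives
\begin{equation*}
\left\|\frac{n!}{(n-s)!}a_n z+s!\,a_s\right\|_p\le\left\|P^{(s)}\right\|_p .
\end{equation*}
Because $s!\binom{n}{s}=\frac{n!}{(n-s)!}$, the left-hand side is exactly $s!\binom{n}{s}\,\bigl\|a_n z+a_s/\binom{n}{s}\bigr\|_p$. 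The point of routing through Theorem \ref{t1} rather than through a root-of-unity averaging identity is that no triangle inequality is used, so this step is valid for every $p\in[0,\infty)$, including the range $0\le p<1$ where Minkowski's inequality is unavailable.

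It then remains to estimate $\|P^{(s)}\|_p$ from above. Here I would invoke the higher-order $L^p$ Erd\H{o}s--Lax inequality: for $P$ having no zeros in $|z|<1$ and $0\le p<\infty$,
\begin{equation*}
\left\|P^{(s)}\right\|_p\le\frac{n!}{(n-s)!}\,\frac{\|P\|_p}{\|1+z\|_p}.
\end{equation*}
Substituting this into the previous display and cancelling the common factor $\frac{n!}{(n-s)!}=s!\binom{n}{s}$ yields $\bigl\|a_n z+a_s/\binom{n}{s}\bigr\|_p\le\|P\|_p/\|1+z\|_p$, which is precisely the claim for $s\ge 1$.

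The main obstacle is the higher-order inequality itself. For $s=1$ it is the classical $L^p$ Erd\H{o}s--Lax estimate (due to de Bruijn for $p\ge 1$, and to Rahman--Schmeisser and Arestov for $0\le p<1$). For $s\ge 2$ it cannot be obtained by iteration: by the Gauss--Lucas theorem the zeros of $P'$ lie in the convex hull of those of $P$, which may well intersect $|z|<1$ (already $P(z)=z^2-1$ gives $P'(z)=2z$), so $P'$ need not inherit the non-vanishing hypothesis. I would therefore argue directly through the conjugate polynomial $Q(z)=z^n\overline{P(1/\bar z)}$, which satisfies $|Q(z)|=|P(z)|$ on $|z|=1$ and, because $P$ has no zeros in $|z|<1$, dominates $P^{(s)}$ in the relevant sense on the circle. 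The factor $1/\|1+z\|_p$ is then produced by the standard device of estimating $P^{(s)}+e^{i\gamma}Q^{(s)}$ and integrating over $\gamma\in[0,2\pi]$, using $\frac{1}{2\pi}\int_0^{2\pi}|1+e^{i\gamma}|^p\diff\gamma=\|1+z\|_p^p$. Making this domination precise for every $s$, and justifying the $\gamma$-integration across the whole range $0\le p<\infty$ — where an Arestov-type $L^p$ lemma is needed to reach $0\le p<1$ and $p=0$ — is the delicate heart of the argument.
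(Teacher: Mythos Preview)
Your argument is correct but takes a genuinely different route from the paper. The paper does not differentiate $P$ at all: it observes (following Arestov) that Theorem~\ref{t2} is the special case $\Lambda_\gamma(z)=z^n+z^s$ of inequality~\eqref{eqtn1}, and its own proof of the more general Theorem~\ref{t3} proceeds by showing, via Vi\`ete's formulas and Rouch\'e's theorem, that the operator sending $P$ to $e^{i\phi}\bigl(a_nz^n+\tfrac{a_s}{\binom{n}{s}}z^s\bigr)+\bigl(\tfrac{a_{n-s}}{\binom{n}{s}}z^{n-s}+a_0\bigr)$ is admissible in Arestov's sense (Lemma~\ref{l3}), and then integrating the resulting bound over $\phi\in[0,2\pi]$. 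Your approach instead factors the estimate as (Theorem~\ref{t1} applied to $P^{(s)}$ with index $0$)~$\times$~(higher-order $L^p$ Erd\H{o}s--Lax for $P^{(s)}$), which is clean and modular but defers essentially all the work to the second factor; as you rightly flag, establishing that factor for every $s\ge1$ and every $0\le p<\infty$ ultimately requires the same Arestov machinery, so little is actually saved. The paper's direct coefficient approach has the further advantage that it carries over unchanged to polynomials non-vanishing in $|z|<\rho$ for $\rho\ge1$, which is the point of Theorem~\ref{t3}; routing through $P^{(s)}$ would instead need the $\rho$-version of the higher-order Erd\H{o}s--Lax inequality as a separate input.
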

It is worth to mention here that the above result is also implicit in the main result of \cite{a2}. In this paper, Arestov studied inequalities for Schur-Szeg\"{o} composition of $\Lambda_\gamma(z)=\sum_{j=0}^n\binom{n}{j}\gamma_jz^j$ having all zeros in $|z|\leq 1$ and a polynomial $P(z)$ with no zero in $|z|<1.$ It was proved that for any function $\varphi(x)=\psi(\log x),$ where $\psi$ is convex and non-decreasing function on $\mathbb{R},$ 
\begin{align}\nonumber\label{eqtn1}
\int_{0}^{2\pi}\int_{0}^{2\pi}\varphi &\left(|(1+e^{i\theta})\Lambda_\gamma P(e^{it})|\right)dtd\theta\\ &\leq \int_{0}^{2\pi}\int_{0}^{2\pi}\varphi\left(|(\gamma_0+e^{i\theta}\gamma_n)P(e^{it})|\right)dtd\theta.
\end{align}
Note that Theorem \ref{t2} follows from \eqref{eqtn1} by taking $\Lambda_\gamma=z^n+z^s$ and $\varphi(x)=x^p,$ when $p>0$ and $\varphi(x)=\log x$ if $p=0.$
\section{Main results}
In this paper, we consider the class of polynomials having no zero in $|z|<\rho$ where $\rho\geq 1$ and obtain a generalization of Theorem \ref{t2}. In this direction, we prove:
\begin{theorem}\label{t3}
If $P\in\mathcal{P}_n$ and $P(z)=\sum_{j=0}^{n}a_jz^j$ does not vanish in $|z|<\rho$ where $\rho\geq 1$ then for $0\leq p<\infty,$
\begin{align}\label{t3e}
\left\|a_nz+\frac{a_s}{\binom{n}{s}}\right\|_p\leq c_p\frac{\left\|1+z\right\|_p}{\left\|\rho^s+z\right\|_p}\left\|P\right\|_p,\qquad s=0,1,\ldots,n-1
\end{align}
where $c_p$ is given by \eqref{cp}.  Moreover, if $a_j=0$ for $j=1,2,\ldots,n-1$ and $s=0,$ then equality holds in \eqref{t3e}.
\end{theorem}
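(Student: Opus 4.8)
The plan is to deduce \eqref{t3e} from an Arestov-type double-integral inequality in which the reflection factor $(1+e^{i\theta})$ of \eqref{eqtn1} is sharpened to $(\rho^s+e^{i\theta})$, precisely because $P$ is now assumed free of zeros in the larger disk $|z|<\rho$. I would first record two reductions. Since the rotation $e^{it}\mapsto e^{i(n-s)t}$ gives $\big\|a_nz^n+\frac{a_s}{\binom{n}{s}}z^s\big\|_p=\big\|a_nz+\frac{a_s}{\binom{n}{s}}\big\|_p$, it is equivalent to estimate the Schur--Szeg\"{o} composition $\Lambda_\gamma P$ with $\Lambda_\gamma(z)=z^n+z^s$ (that is, $\gamma_n=1$, $\gamma_s=1/\binom{n}{s}$, all other $\gamma_j=0$). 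Second, the case $s=0$ is degenerate: there $\rho^s=1$, the ratio $\|1+z\|_p/\|\rho^s+z\|_p$ equals $1$, and the assertion collapses to Theorem \ref{t1} (equivalently Theorem \ref{t2} with $s=0$). I would dispose of it, together with the equality statement, at the outset by noting that for $P(z)=a_nz^n+a_0$ one has $\|a_nz+a_0\|_p=\|a_nz^n+a_0\|_p=\|P\|_p$, which matches the right-hand side of \eqref{t3e} exactly.

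The heart of the argument is the following $\rho$-analogue of \eqref{eqtn1}: if $P$ has no zero in $|z|<\rho$ and $\Lambda_\gamma(z)=z^n+z^s$, then for every $\varphi(x)=\psi(\log x)$ with $\psi$ convex and nondecreasing,
\[
\int_0^{2\pi}\!\!\int_0^{2\pi}\varphi\big(|(\rho^s+e^{i\theta})\,\Lambda_\gamma P(e^{it})|\big)\,dt\,d\theta\le\int_0^{2\pi}\!\!\int_0^{2\pi}\varphi\big(|(\gamma_0\rho^s+\gamma_n e^{i\theta})\,P(e^{it})|\big)\,dt\,d\theta.
\]
Granting this, Theorem \ref{t3} follows by specialization exactly as Theorem \ref{t2} follows from \eqref{eqtn1}: take $\varphi(x)=x^p$ for $p>0$ and $\varphi(x)=\log x$ for $p=0$. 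For $s>0$ one has $\gamma_0=0$, $\gamma_n=1$, so the right-hand side factors as $2\pi\int\varphi(|P(e^{it})|)\,dt$, while the left-hand side separates into $\big(\int|\rho^s+e^{i\theta}|^p\,d\theta\big)\big(\int|\Lambda_\gamma P(e^{it})|^p\,dt\big)$; dividing yields $\|\rho^s+z\|_p\,\big\|a_nz+\frac{a_s}{\binom{n}{s}}\big\|_p\le\|P\|_p$, i.e.\ \eqref{t3e} with $c_p=1/\|1+z\|_p$. The monotonicity $\|\rho^s+z\|_p\ge\|1+z\|_p$ for $\rho\ge1$ confirms this is genuinely stronger than Theorem \ref{t2}.

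The main obstacle is proving the displayed $\rho$-inequality, since it cannot be obtained from \eqref{eqtn1} by the naive rescaling $P(z)\mapsto P(\rho z)$: that substitution does produce a gain of $\rho^{s}$ in the multiplier, but it simultaneously replaces $\|P\|_p$ by $\|P(\rho\,\cdot)\|_p$, and subharmonicity of $|P|^p$ forces $\|P(\rho\,\cdot)\|_p\ge\|P\|_p$, so the error runs the wrong way and destroys the estimate. I would therefore prove the inequality intrinsically via Arestov's characterization of admissible multiplier operators: the task reduces to checking that the operator $P\mapsto(\rho^s+e^{i\theta})\Lambda_\gamma P$, acting on the class of polynomials with no zero in $|z|<\rho$, has its associated symbol polynomial with all zeros in $|z|\le1$ once the $\rho$-constraint is incorporated, the exponent $s$ being dictated by the lowest-degree term $z^s$ of $\Lambda_\gamma=z^s(z^{n-s}+1)$. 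Equivalently, one may pass to extremal polynomials having all zeros on $|z|=\rho$ and verify the zero-location condition there. Establishing this admissibility statement is where the real work lies; once it is in hand, the convexity machinery of Arestov delivers the inequality for all admissible $\varphi$ simultaneously, and the specialization above finishes the proof.
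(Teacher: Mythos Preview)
Your displayed $\rho$-inequality is exactly the right target, and the deduction of \eqref{t3e} from it (including the disposal of $s=0$ and the equality case) is correct. The gap is in the mechanism you propose for proving that inequality. Arestov's admissibility result (Lemma~\ref{l3} here) concerns operators $C_{\boldsymbol\gamma}$ acting on all of $\mathcal P_n$; there is no $\rho$-parameter in that framework. Your sentence about checking that ``the operator $P\mapsto(\rho^s+e^{i\theta})\Lambda_\gamma P$ \ldots\ has its associated symbol polynomial with all zeros in $|z|\le1$ once the $\rho$-constraint is incorporated'' is not a statement that fits any result actually on the table, and you give no indication of how to turn it into one. You correctly note that the rescaling $P\mapsto P(\rho\,\cdot)$ fails at the level of the \emph{integral} inequality because $\|P(\rho\,\cdot)\|_p\ge\|P\|_p$, but you then leave the matter open.

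The paper closes this gap by a different decomposition. First (Lemma~\ref{nl1}) it proves the \emph{pointwise} bound
\[
\rho^s\Big|a_ne^{in\theta}+\tfrac{a_s}{\binom{n}{s}}e^{is\theta}\Big|\ \le\ \Big|\tfrac{a_{n-s}}{\binom{n}{s}}e^{i(n-s)\theta}+a_0\Big|\qquad(0\le\theta<2\pi),
\]
and here the rescaling $F(z)=P(\rho z)$ \emph{does} work, since no integral norm of $P$ intervenes: the argument runs through Vi\`ete's formulas for $F+\lambda F^\star$ and then evaluates at $z=e^{i\theta}/\rho$. Second (Lemma~\ref{l2}) it establishes the $\rho=1$ inequality $\int_0^{2\pi}|G(\theta)+e^{i\phi}Q(\theta)|^p\,d\theta\le\Lambda^p\int_0^{2\pi}|P(e^{i\theta})|^p\,d\theta$ by a genuine admissibility check in Arestov's sense. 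The two pieces are combined in the main proof via the elementary monotonicity $\int_0^{2\pi}|t_1+e^{i\phi}|^p\,d\phi\ge\int_0^{2\pi}|t_2+e^{i\phi}|^p\,d\phi$ for $t_1\ge t_2\ge1$: with $t_1=|Q(\theta)|/|G(\theta)|$ and $t_2=\rho^s$, the pointwise gain from Lemma~\ref{nl1} is converted, after integration in $\phi$, into the factor $\|\rho^s+z\|_p$. This monotonicity step is precisely the device that replaces the direct ``$\rho$-admissibility'' you were reaching for.
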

The first Corollary is obtained by letting $p\rightarrow \infty$ in \eqref{t3e}.
\begin{corollary}
If $P\in\mathcal{P}_n$ and $P(z)=\sum_{j=0}^{n}a_jz^j$ does not vanish in $|z|<\rho$ where $\rho\geq 1$ then for $s=1,\ldots,n-1$
\begin{align}
|a_n|+\frac{|a_s|}{\binom{n}{s}}\leq \frac{\left\|P\right\|_\infty}{1+\rho^s}. 
\end{align}
\end{corollary}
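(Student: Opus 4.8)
The plan is to reduce the statement to the Schur--Szeg\"{o} picture already used for Theorem~\ref{t2} and then to extract the factor $\rho^s$ from the enlarged zero-free disk. First I would set $\Lambda_\gamma=z^n+z^s$, so that $\Lambda_\gamma P(z)=a_nz^n+\frac{a_s}{\binom ns}z^s=z^s\bigl(a_nz^{n-s}+\frac{a_s}{\binom ns}\bigr)$; by rotation invariance and the periodicity fact that $|a_ne^{i(n-s)\theta}+c|$ averages exactly as $|a_ne^{i\theta}+c|$, this gives $\|\Lambda_\gamma P\|_p=\bigl\|a_nz+\frac{a_s}{\binom ns}\bigr\|_p$. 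For $s=0$ the exponent is $\rho^{0}=1$, so the factor $\frac{\|1+z\|_p}{\|\rho^s+z\|_p}$ equals $1$ and $c_p=1$, and the inequality is precisely Theorem~\ref{t2}; the polynomial $P(z)=a_nz^n+a_0$ then saturates it, which settles the equality assertion. Hence the entire content is the bound $\|\rho^s+z\|_p\,\bigl\|a_nz+\frac{a_s}{\binom ns}\bigr\|_p\le\|P\|_p$ for $1\le s\le n-1$.

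Next I would locate the zeros produced by the composition. Since $\Lambda_\gamma=z^n+z^s$ has all its zeros in $|z|\le1$ and $P$ has all of its zeros in $|z|\ge\rho$, the classical Szeg\"{o} composition theorem shows that every nonzero zero of $\Lambda_\gamma P$ has modulus at least $\rho$; equivalently $\frac{|a_s|}{\binom ns|a_n|}\ge\rho^{\,n-s}$. To see that this is exactly what feeds the factor $\rho^s$, I would first verify the endpoint $p=0$, where everything is explicit: because $P$ has no zeros in $|z|<1$ one has $\|P\|_0=|a_0|$, while $\bigl\|a_nz+\frac{a_s}{\binom ns}\bigr\|_0=\frac{|a_s|}{\binom ns}$ and $\|\rho^s+z\|_0=\rho^s$, so the target reads $\rho^s\frac{|a_s|}{\binom ns}\le|a_0|$. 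Writing $P(z)=a_n\prod_{j=1}^n(z-z_j)$ with $|z_j|\ge\rho$, Vieta gives $\frac{a_s}{a_n}=\pm e_{n-s}(z_1,\dots,z_n)$ and $\frac{a_0}{a_n}=\pm\prod_jz_j$, where $e_{n-s}$ is the elementary symmetric function; for each $(n-s)$-subset $S$ the product $\prod_{j\in S}|z_j|$ is $\prod_j|z_j|$ divided by the $s$ omitted factors, each of modulus $\ge\rho$, hence at most $\rho^{-s}\prod_j|z_j|$, and summing over the $\binom ns$ subsets yields $\rho^s\frac{|a_s|}{\binom ns}\le|a_0|$. Thus the exponent $s$ is precisely the count of distant zeros ``missing'' from the $z^s$-slot.

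Finally I would lift this from $p=0$ to every $p\in[0,\infty)$ by reproving it inside Arestov's framework \eqref{eqtn1}: the goal is the composition inequality $\int_0^{2\pi}\!\!\int_0^{2\pi}\varphi\bigl(|(\rho^s+e^{i\theta})\Lambda_\gamma P(e^{it})|\bigr)\,dt\,d\theta\le\int_0^{2\pi}\!\!\int_0^{2\pi}\varphi\bigl(|P(e^{it})|\bigr)\,dt\,d\theta$ for every admissible $\varphi=\psi(\log x)$, after which $\varphi(x)=x^p$ (and $\varphi=\log x$ for $p=0$) gives \eqref{t3e}, and $p\to\infty$ gives the Corollary. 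The hard part is exactly the passage from the multiplier $1+e^{i\theta}$ to $\rho^s+e^{i\theta}$: inequality \eqref{eqtn1} as stated produces only the multiplier $1+e^{i\theta}$, which recovers Theorem~\ref{t2} but not the $\rho$-gain, and since $\rho^s+e^{i\theta}$ strictly enlarges the left-hand side the improvement cannot come formally from \eqref{eqtn1} alone. The enlarged zero-free disk must instead be injected into the variational/convexity argument underlying Arestov's theorem, and the crux is to verify that the auxiliary polynomial attached to the factor $\rho^s+e^{i\theta}$ still has all its zeros in $|z|\le1$, so that the convexity machinery continues to apply; the zero-location established above is what should make this admissibility hold, and confirming it uniformly in $p$ is where the real difficulty lies.
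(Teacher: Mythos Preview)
The paper's proof of this Corollary is a single line: let $p\to\infty$ in Theorem~\ref{t3}. You do say this explicitly (``and $p\to\infty$ gives the Corollary''), so on that level your proposal agrees with the paper. However, almost all of your text is devoted to re-deriving Theorem~\ref{t3} itself, and that re-derivation is both different from the paper's and left incomplete.

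The paper does \emph{not} try to upgrade Arestov's inequality \eqref{eqtn1} so that the multiplier $1+e^{i\theta}$ is replaced by $\rho^s+e^{i\theta}$, which is the route you sketch in your last paragraph. Instead it separates the $\rho$-dependence into a purely pointwise lemma (Lemma~\ref{nl1}):
\[
\rho^s\Bigl|a_ne^{in\theta}+\tfrac{a_s}{\binom ns}e^{is\theta}\Bigr|\le\Bigl|\tfrac{a_{n-s}}{\binom ns}e^{i(n-s)\theta}+a_0\Bigr|\qquad(0\le\theta<2\pi),
\]
proved by applying Vi\`ete's formulas to the scaled polynomial $F(z)=P(\rho z)$ and its conjugate $F^\star$. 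This is then combined with an Arestov-type inequality for the case $\rho=1$ (Lemma~\ref{l2}) via the elementary monotonicity $\int_0^{2\pi}|t+e^{i\phi}|^p\,d\phi$ increasing in $t\ge1$, which converts the pointwise ratio bound $|Q(\theta)|/|G(\theta)|\ge\rho^s$ into the factor $\|\rho^s+z\|_p$ in the denominator.

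Your second paragraph is essentially a special case of Lemma~\ref{nl1}: the Vieta inequality $\rho^s\tfrac{|a_s|}{\binom ns}\le|a_0|$ is precisely the conclusion of Lemma~\ref{nl1} at $\theta$ chosen to align phases, and it is exactly the $p=0$ instance of Theorem~\ref{t3}. But you never formulate the full pointwise inequality, and without it you have no device to pass from $p=0$ to general $p$. The ``admissibility'' verification you defer at the end---checking that the operator attached to $\rho^s+e^{i\theta}$ still preserves the zero location---is precisely where the paper's Lemma~\ref{nl1} does the work, and as written your proposal has a genuine gap at that step. If your aim is only the Corollary, simply cite Theorem~\ref{t3} and let $p\to\infty$; if your aim is Theorem~\ref{t3}, the missing ingredient is the pointwise comparison of Lemma~\ref{nl1}, after which the monotonicity-in-$t$ argument replaces the direct modification of Arestov's framework that you were reaching for.
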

By using inequality \eqref{t3e} in conjunction with Lemma \ref{sl1}, we obtain following generalization of Theorem \ref{t2}.
\begin{corollary}\label{c2}
If $P\in\mathcal{P}_n$ and $P(z)=\sum_{j=0}^{n}a_jz^j$ does not vanish in $|z|<\rho$ where $\rho\geq 1$ then for $0\leq p<\infty,$
\begin{align}\label{c2e}
|a_n|+\frac{|a_s|}{\binom{n}{s}}\leq 2c_p\frac{\left\|P\right\|_p}{\left\|\rho^s+z\right\|_p}
\end{align}
where $s=0,1,\ldots,n-1$ and 
$c_p$ is given by \eqref{cp}.
\end{corollary}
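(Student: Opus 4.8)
The plan is to deduce \eqref{c2e} directly from Theorem \ref{t3} once we have a ``bridge'' that converts the $L^p$-norm of the linear polynomial on the left of \eqref{t3e} into the sum $|a_n|+|a_s|/\binom{n}{s}$ of the moduli of its coefficients. Writing $\alpha=a_n$ and $\beta=a_s/\binom{n}{s}$, the quantity appearing in \eqref{t3e} is exactly $\|\alpha z+\beta\|_p$, so the entire argument reduces to the auxiliary estimate
\begin{equation*}
(|\alpha|+|\beta|)\,\|1+z\|_p\le 2\,\|\alpha z+\beta\|_p,\qquad 0\le p<\infty,
\end{equation*}
which is what I expect Lemma \ref{sl1} to assert for arbitrary complex $\alpha,\beta$.

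To prove this auxiliary estimate I would first strip off the arguments of $\alpha$ and $\beta$. Since Lebesgue measure on the circle is rotation invariant, replacing $z$ by $e^{i\gamma}z$ leaves $\|\alpha z+\beta\|_p$ unchanged; choosing $\gamma$ so that $\alpha e^{i\gamma}$ and $\beta$ acquire the same argument and then factoring out the common unimodular constant shows $\|\alpha z+\beta\|_p=\||\alpha|\,z+|\beta|\|_p$ (the degenerate cases $\alpha=0$ or $\beta=0$ being immediate). Hence it suffices to treat $\alpha,\beta\ge 0$. The crux is then the \emph{pointwise} inequality
\begin{equation*}
|\alpha e^{i\theta}+\beta|\ge \tfrac{\alpha+\beta}{2}\,|1+e^{i\theta}|\qquad(\theta\in[0,2\pi),\ \alpha,\beta\ge 0),
\end{equation*}
which follows from the elementary identity
\begin{equation*}
|\alpha e^{i\theta}+\beta|^2-\Big(\tfrac{\alpha+\beta}{2}\Big)^2|1+e^{i\theta}|^2=\tfrac{(\alpha-\beta)^2}{2}\,(1-\cos\theta)\ge 0.
\end{equation*}
Raising this pointwise bound to the power $p$ and integrating (for $p>0$), or taking logarithms and integrating (for $p=0$), immediately yields $\||\alpha|z+|\beta|\|_p\ge\frac{\alpha+\beta}{2}\|1+z\|_p$, which is the desired estimate.

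With the lemma in hand the corollary is one line: apply the auxiliary estimate with $\alpha=a_n$, $\beta=a_s/\binom{n}{s}$ to obtain
\begin{equation*}
|a_n|+\frac{|a_s|}{\binom{n}{s}}\le \frac{2}{\|1+z\|_p}\,\Big\|a_nz+\frac{a_s}{\binom{n}{s}}\Big\|_p,
\end{equation*}
and then bound the right-hand norm by \eqref{t3e}. The factors $\|1+z\|_p$ cancel and what remains is exactly $2c_p\|P\|_p/\|\rho^s+z\|_p$, which is \eqref{c2e}.

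A word on the difficulty: the combination step is trivial, so all the content sits in the auxiliary lemma, and there the only delicate point is uniformity over the whole range $0\le p<\infty$. A Minkowski-type proof would break down for $0<p<1$, and the logarithmic endpoint $p=0$ would need separate handling through the Mahler-measure value of $\|\cdot\|_0$. The pointwise comparison above is precisely what lets me avoid these case distinctions, since it is preserved by every monotone averaging operation used to define $\|\cdot\|_p$ for $0\le p<\infty$. I therefore expect the verification of the pointwise bound, together with the rotation reduction that makes it applicable, to be the only genuine (though routine) obstacle.
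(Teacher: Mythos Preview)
Your proposal is correct and follows exactly the route the paper indicates: combine Theorem~\ref{t3} with Lemma~\ref{sl1} (your ``auxiliary estimate'') applied to $\alpha=a_n$, $\beta=a_s/\binom{n}{s}$, and cancel the common factor $\|1+z\|_p$. The only difference is that the paper simply quotes Lemma~\ref{sl1} from \cite{sg}, whereas you supply a self-contained proof of it via the pointwise identity $|\alpha e^{i\theta}+\beta|^2-\bigl(\tfrac{\alpha+\beta}{2}\bigr)^2|1+e^{i\theta}|^2=\tfrac{(\alpha-\beta)^2}{2}(1-\cos\theta)$; this extra argument is sound and even covers the endpoint $p=0$ directly, which the paper's statement of the lemma (for $p>0$) would otherwise require a limiting step to handle.
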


\section{Lemmas}
\begin{lemma}\label{nl1}
If $P\in\mathcal{P}_n$ and $P(z)=\sum_{j=0}^{n}a_jz^j$ does not vanish in $|z|<\rho$ where $\rho \geq 1$ then 
\begin{align}
\rho^s\left|a_nz^n+\frac{a_s}{\binom{n}{s}}z^s\right|\leq\left|\frac{a_s}{\binom{n}{s}}z^{n-s}+a_0\right|\qquad\text{for}\quad |z|=1.
\end{align}
\end{lemma}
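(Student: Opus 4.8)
My plan is to reduce the stated pointwise estimate to a one‑parameter comparison of binomials and finally to a single scalar inequality, isolating where the zero‑free hypothesis really enters. Write $b=a_s/\binom{n}{s}$ and $P(z)=a_n\prod_k(z-\beta_k)$ with $|\beta_k|\ge\rho$. Since $|z^s|=1$ on $|z|=1$, the left side factors as $\rho^s\bigl|a_nz^n+bz^s\bigr|=\rho^s\bigl|a_nz^{n-s}+b\bigr|$, and because $z\mapsto z^{n-s}$ maps the unit circle onto itself (here $n-s\ge1$), it suffices to prove $\rho^s|a_nw+b|\le|bw+a_0|$ for every $|w|=1$. The degenerate case $b=0$ is immediate: the claim becomes $\rho^s|a_n|\le|a_0|$, which follows from $|a_0|=|a_n|\prod_k|\beta_k|\ge\rho^n|a_n|\ge\rho^s|a_n|$.

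The second step is to pin down the zeros entering this comparison. The point is that $a_nz^n+bz^s$ is exactly the Schur–Szegő composition of $\Lambda(z)=z^n+z^s$ with $P$: in the normalisation $\Lambda=\sum_j\binom{n}{j}\gamma_jz^j$ one has $\gamma_n=1$, $\gamma_s=1/\binom{n}{s}$ and the rest zero, so the composition has $z^n$‑coefficient $a_n$ and $z^s$‑coefficient $b$. All zeros of $\Lambda=z^s(z^{n-s}+1)$ lie in $|z|\le1$ while those of $P$ lie in $|z|\ge\rho$, so by the Grace–Szegő composition (coincidence) theorem every zero of the composition has the form $-\alpha\beta$ with $\alpha\in\{|z|\ge\rho\}$ and $\beta$ a zero of $\Lambda$; a nonzero zero forces $\beta\ne0$, hence $|\beta|=1$ and $|{-\alpha\beta}|\ge\rho$. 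Thus (for $b\ne0$) the binomial $a_nz^{n-s}+b$ has all its zeros in $|z|\ge\rho$, giving $|b|\ge\rho^{n-s}|a_n|$; the analogous composition with $z^s+1$ yields $|a_0|\ge\rho^s|b|$, and the product of the zeros of $P$ gives $|a_0|\ge\rho^n|a_n|$.

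For the final step I expand the target on the circle. Writing out $|bw+a_0|^2-\rho^{2s}|a_nw+b|^2$ for $|w|=1$ gives $C+2\operatorname{Re}(\kappa w)$ with $C=|a_0|^2+(1-\rho^{2s})|b|^2-\rho^{2s}|a_n|^2$ and $\kappa=\overline{a_0}b-\rho^{2s}a_n\overline{b}$; minimising over the circle, the whole lemma collapses to the single scalar inequality $C\ge2|\kappa|$, where the non‑cross terms are controlled by $|a_0|\ge\rho^n|a_n|\ge\rho^s|a_n|$ and $\rho\ge1$.

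Proving $C\ge2|\kappa|$ is the crux, and this is where I expect the genuine difficulty to sit. The modulus bounds collected above are by themselves insufficient: if $a_n,b,a_0$ are treated as free numbers subject only to $|b|\ge\rho^{n-s}|a_n|$, $|a_0|\ge\rho^s|b|$, $|a_0|\ge\rho^n|a_n|$, one can align phases (for instance $a_n=1$, $b=R$ real large, $a_0=-\rho^sR$) so that $C-2|\kappa|\sim R^2(1-2\rho^s)<0$. Hence the argument must exploit the coupling of the \emph{arguments} of $a_0,a_s,a_n$ forced by $P$ having all zeros in $|z|\ge\rho$ — information the coarse inequalities discard. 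Concretely I would return to $P(z)=a_n\prod_k(z-\beta_k)$, express $\kappa$ through $e_{n-s}(\beta)$ and $e_n(\beta)=\prod_k\beta_k$, and bound $|\kappa|$ against $C$ factorwise using $|\beta_k|\ge\rho$; equivalently, I would run an apolarity/Grace argument on the pencil $(b-\mu a_n)z^{n-s}+(a_0-\mu b)$, since the desired bound is exactly the statement that this binomial is free of zeros in the closed unit disc for all $|\mu|<\rho^s$. Making that last translation sharp — turning the geometric hypothesis on the $\beta_k$ into the scalar estimate while respecting the phases — is the step I would budget the most effort for.
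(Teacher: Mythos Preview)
Your proposal stalls for a structural reason: the lemma as printed contains a misprint. The right–hand side should be $\left|\dfrac{a_{n-s}}{\binom{n}{s}}z^{n-s}+a_0\right|$, not $a_s$; this is what the paper's own proof actually establishes and what is then invoked in the proof of the main theorem. With $a_s$ the assertion is simply false: for $P(z)=(z-1)^3$, $\rho=1$, $s=1$ one has $b=a_1/3=1$ and $a_0=-1$, and your reduced inequality $|w+1|\le|w-1|$ on $|w|=1$ fails at $w=1$. The phase–alignment obstruction you uncover (and the failure of $C\ge 2|\kappa|$) is exactly this counterexample in disguise, so no refinement using ``coupling of the arguments'' can rescue the version you were handed.

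With the index corrected to $a_{n-s}$, the paper's route differs from yours. Rather than Grace--Szeg\"{o} composition, it sets $F(z)=P(\rho z)$ and $F^{\star}(z)=z^n\overline{F(1/\bar z)}$, and uses Rouch\'e to see that $F+\lambda F^{\star}$ has all zeros in $|z|\le 1$ whenever $|\lambda|>1$. Vi\`ete's formula on those zeros bounds $(\rho^{s}a_s+\lambda\rho^{n-s}\bar a_{n-s})/(\rho^{n}a_n+\lambda\bar a_0)$ by $\binom{n}{s}$, so the two–term polynomial $(\rho^{n}a_n+\lambda\bar a_0)z^{n-s}+(\rho^{s}a_s+\lambda\rho^{n-s}\bar a_{n-s})/\binom{n}{s}$ keeps its zeros in $|z|\le 1$ for every such $\lambda$; varying $\lambda$ then forces $\bigl|\rho^{n}a_n z^{n-s}+\rho^{s}b\bigr|\le\bigl|\rho^{n-s}c\,z^{n-s}+a_0\bigr|$ (with $c=a_{n-s}/\binom{n}{s}$) on $|z|\ge 1$, the maximum principle pushes it into $|z|\le 1$, and evaluating at $z=e^{i\theta}/\rho$ finishes. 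The appearance of $a_{n-s}$ is inevitable here because $\bar a_{n-s}$ is the $z^{s}$–coefficient of $F^{\star}$. The pencil/Rouch\'e mechanism you sketch in your final paragraph is therefore the right instinct, but it must be run on $F+\lambda F^{\star}$ rather than on a pencil built from $a_0,a_s,a_n$ alone.
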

\begin{proof}
Let $F(z)=P(\rho z).$ Since $P(z)$ has all its zeros in $|z|\geq \rho,$ then all the zeros of $F^{\star}(z)=z^n\overline{F(1/\overline{z})}$ lie in $|z|\leq 1$ and $|F^{\star}(z)|=|F(z)|$ for $|z|=1.$ Since $F(z)$ does not vanish in $|z|<1,$ therefore $F^{\star}(z)/F(z) $ is analytic in $|z|\leq 1.$ By maximum modulus principle $|F^{\star}(z)|\leq |F(z)|$ for $|z|\leq 1$ or equivalently $|F(z)|\leq |F^{\star}(z)|$ for $|z|\geq 1.$ By using Rouch\'{e}'s theorem it follows that all the zeros of polynomial $$F(z)+\lambda F^{\star}(z)=\sum_{j=0}^{n}(\rho^ja_{j}+\lambda \rho^{n-j}\overline{a}_{n-j})z^{j}$$ lie in $|z|\leq 1$ for every $\lambda\in\mathbb{C}$ with $|\lambda|> 1.$ If $z_1,z_2,\ldots,z_n$ are zeros of $F(z)+\lambda F^{\star}(z),$ then $|z_j|\leq 1,$ $j=1,2,\ldots,n$ and we have by Vi\`{e}te's formula for $s=0,1,\ldots,n-1,$
\begin{align*}
(-1)^{n-s}\left(\frac{\rho^{s}a_s+\lambda \rho^{n-s}\bar{a}_{n-s}}{\rho^{n}a_n+\lambda \bar{a}_0}\right)=\sum\limits_{1\leq i_{1}< i_{2}\ldots<i_{n-s}\leq n}z_{i_{1}}z_{i_{2}}\ldots z_{i_{n-s}}
\end{align*}
which gives
\begin{align}\label{l1eq1ee}\nonumber
\left|\frac{\rho^{s}a_s+\lambda \rho^{n-s}\bar{a}_{n-s}}{\rho^na_n+\lambda \bar{a}_0}\right|&\leq\sum\limits_{1\leq i_{1}< i_{2}\ldots<i_{n-s}\leq n}\left|z_{i_{1}}z_{i_{2}}\ldots z_{i_{n-s}}\right|\\ &\leq \binom{n}{n-s}=\binom{n}{s}.
\end{align}
Therefore, all the zeros of polynomial 
\begin{align*}
M(z)&=(\rho^na_n+\lambda \bar{a}_0)z^n+\frac{\rho^sa_s+\lambda \rho^{n-s} \bar{a}_{n-s}}{\binom{n}{s}}z^s\\&=\rho^na_nz^n+\rho^s\frac{a_s}{\binom{n}{s}}z^s+\lambda\left(\bar{a}_0z^n+\rho^{n-s}\frac{\bar{a}_{n-s}}{\binom{n}{s}}z^s\right)
\end{align*}
lie in $|z|\leq 1$ for $\lambda\in\mathbb{C}$ with $|\lambda|>1$ and hence for  $r>1,$ the polynomial
\begin{align*}
M(rz)=\rho^na_n(rz)^n+\rho^s\frac{a_s}{\binom{n}{s}}(rz)^s+\lambda\left(\bar{a}_0(rz)^n+\rho^{n-s}\frac{\bar{a}_{n-s}}{\binom{n}{s}}(rz)^s\right)
\end{align*}
has all its zeros in $|z|<1.$ This implies 
\begin{align}\label{l1eq2ee}
\left|\rho^na_n(rz)^n+\rho^s\frac{a_s}{\binom{n}{s}}(rz)^s\right|\leq \left|\bar{a}_0(rz)^n+\rho^{n-s}\frac{\bar{a}_{n-s}}{\binom{n}{s}}(rz)^k\right|
\end{align}
for $|z|\geq 1.$ Indeed, if inequality \eqref{l1eq2ee} is not true, then there exists a point $w$ with $|w|\geq 1$ such that 
\begin{align*}
\left|\rho^na_n(rw)^n+\rho^s\frac{a_s}{\binom{n}{s}}(rw)^s\right|> \left|\bar{a}_0(rw)^n+\rho^{n-s}\frac{\bar{a}_{n-s}}{\binom{n}{s}}(rw)^k\right|
\end{align*}
Since all the zeros of $F^{\star}(z)=z^{n}\overline{P(\rho/\overline{z})}=\bar{a}_0z^n+\bar{a}_1\rho z^{(n-1)}+\cdots+\bar{a}_{(n-s)}\rho^{(n-s)}z^{s}+\cdots+\bar{a}_n\rho^n $ lie in $|z|\leq 1,$ Vi\`{e}te's formula again yields, as before in  \eqref{l1eq1ee},  $|\bar{a}_0|\geq \rho^{n-s}|\bar{a}_{n-s}|/\binom{n}{s},$ which implies that $\bar{a_0}(rw)^{n}+\rho^{n-s}\bar{a}_{n-s}/\binom{n}{s}(rw)^{s}\neq 0.$ We take
$$\lambda=-\frac{\rho^na_n(rw)^{n}+\rho^s\frac{a_s}{\binom{n}{s}}(rw)^{s}}{\bar{a}_0(rw)^{n}+\rho^{n-s}\frac{\bar{a}_{n-s}}{\binom{n}{s}}(rw)^{s}}, $$
then $\lambda$ is a well defined real or complex number with $|\lambda|>1$ and with this choice of $\lambda,$ we obtain $M(rw)=0$ where $|w|\geq 1.$ This contradicts the fact that all the zeros of $M(rz)$ lie in $|z|<1.$ Thus \eqref{l1eq2ee} holds. Letting $r\rightarrow 1$ in \eqref{l1eq2ee} and using continuity, we obtain
 \begin{align}\label{l1eq2'}
\left|\rho^na_nz^{n-s}+\rho^s\frac{a_s}{\binom{n}{s}}\right|\leq \left|\bar{a}_0z^n+\rho^{n-s}\frac{\bar{a}_{n-s}}{\binom{n}{s}}z^s\right|=\left|\frac{\rho^{n-s}a_{n-s}}{\binom{n}{s}}z^{n-s}+a_0\right|
\end{align}
for $|z|= 1.$ Again, since $|\bar{a}_0|\geq \rho^{n-s}|\bar{a}_{n-s}|/\binom{n}{s}$ the polynomial $\frac{\rho^{n-s}a_{n-s}}{\binom{n}{s}}z^{n-s}+a_0$ does not vanish in $|z|<1.$ Hence by the maximum modulus principle, 
 \begin{align}\label{1z}
 \left|\rho^na_nz^{n-s}+\rho^s\frac{a_s}{\binom{n}{s}}\right|\leq \left|\rho^{n-s}\frac{a_{n-s}}{\binom{n}{s}}z^{n-s}+a_0\right|\quad \text{for}\quad |z|\leq 1.
 \end{align}
Taking $z=e^{i\theta}/\rho$ where $0\leq \theta<2\pi$ in \eqref{1z}, we get
\begin{align*}
\rho^s\left|a_ne^{i(n-s)\theta}+\frac{a_s}{\binom{n}{s}}\right|\leq \left|\frac{a_{n-s}}{\binom{n}{s}}e^{i(n-s)\theta}+a_0\right|.
\end{align*}
Equivalently,
\begin{align*}
\rho^s\left|a_nz^n+\frac{a_s}{\binom{n}{s}}z^s\right|\leq \left|\frac{a_{n-s}}{\binom{n}{s}}z^{n-s}+a_0\right|\qquad\text{for}~~|z|=1.
\end{align*}
This completes the proof of Lemma \ref{nl1}.
\end{proof}
We now describe a result of Arestov \cite{2}.

For $\boldsymbol\gamma = \left(\gamma_{0},\gamma_{1},\ldots,\gamma_{n}\right)\in\mathbb {C}^{n+1}\,\,\,\,\mbox{}\,\,\,\textrm{and}\,\,\,\,\,\,P(z)=\sum_{j=0}^{n}a_{j}z^{j} \in \mathcal P_{n}$, we define\\
$$C_{\boldsymbol\gamma}P(z)=\sum_{j=0}^{n}\gamma_{j} a_{j}z^{j}.$$
\indent The operator $C_{\boldsymbol\gamma}$ is said to be \textit{admissible} if it preserves one of the following properties:
\begin{enumerate}
\item[(i)] $P(z)$ has all its zeros in $|z|\leq 1,$
\item[(ii)] $P(z)$ has all its zeros in $|z|\geq 1.$ 
\end{enumerate}
The result of Arestov \cite[Theorem 2]{2} may now be stated as follows.
\begin{lemma}\label{l3}
 Let $\varphi(x)=\psi(\log x)$ where $\psi$ is a convex non-decreasing function on $\mathbb{R}$. Then for all $P\in \mathcal P_{n}$ and each admissible operator $C_{\boldsymbol\gamma}$,
\begin{equation}
\int_{0}^{2\pi}\varphi\left(|C_{\boldsymbol\gamma}P(e^{i\theta})|\right)d\theta \leq \int_{0}^{2\pi}\varphi\left(c(\boldsymbol\gamma)|P(e^{i\theta})|\right)d\theta\nonumber
\end{equation}
where $c(\boldsymbol{\gamma})= \max \left(|\gamma_{0}|,|\gamma_{n}|\right)$.
\end{lemma}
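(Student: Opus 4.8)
This is Arestov's theorem, and it should be stressed at the outset that it does \emph{not} follow from the pointwise estimate in Lemma~\ref{nl1}; it requires its own machinery, whose essential feature is that it avoids duality and therefore covers every convex non-decreasing $\psi$ simultaneously --- in particular the ranges $0\le p<1$ and $p=0$, where $L^p$ is not normed and the usual $p\ge 1$ arguments break down. The plan is to normalise so that $c(\boldsymbol\gamma)=\max(|\gamma_0|,|\gamma_n|)=1$ and then to study the functional not at the single polynomial $C_{\boldsymbol\gamma}P$ but along a one-parameter analytic family, exploiting subharmonicity in the parameter.

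The analytic engine is the following. For polynomials $A,B$ put $f_\lambda(z)=A(z)+\lambda B(z)$ and
\[
 G(\lambda)=\int_{0}^{2\pi}\varphi\bigl(|f_\lambda(e^{i\theta})|\bigr)\,d\theta=\int_{0}^{2\pi}\psi\bigl(\log|f_\lambda(e^{i\theta})|\bigr)\,d\theta .
\]
For each fixed $\theta$ the function $\lambda\mapsto\log|f_\lambda(e^{i\theta})|$ is harmonic off the zeros of $f_\lambda(e^{i\theta})$ and equals $-\infty$ there, hence is subharmonic in $\lambda$; since $\psi$ is convex and non-decreasing, $\lambda\mapsto\psi(\log|f_\lambda(e^{i\theta})|)$ is again subharmonic, and integration in $\theta$ preserves this. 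Thus $G$ is subharmonic on $\mathbb{C}$, so its circular means $r\mapsto\frac{1}{2\pi}\int_0^{2\pi}G(re^{it})\,dt$ are non-decreasing in $r$. Together with the maximum principle, this lets one bound the value of $G$ at the parameter corresponding to $C_{\boldsymbol\gamma}P$ by the extremal value of $G$ over a suitable parameter region, and crucially this comparison holds \emph{uniformly in $\psi$}, which is the whole point.

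It remains to choose $A,B$ so that the two sides of the desired inequality are realised through $G$, with the extremal parameter value carrying the majorant. Here admissibility of $C_{\boldsymbol\gamma}$ enters: writing the majorant $c(\boldsymbol\gamma)P$ via the reflection $P^{\star}(z)=z^{n}\overline{P(1/\overline{z})}$, for which $|P^{\star}(e^{i\theta})|=|P(e^{i\theta})|$, one builds $A$ and $B$ from the weights $\gamma_j$ and from $P,P^{\star}$ so that the interior parameter value gives $C_{\boldsymbol\gamma}P$ while the extremal value reproduces the reflected majorant of modulus $c(\boldsymbol\gamma)|P|$ on $|z|=1$, and so that admissibility forces the zeros of $f_\lambda$ to stay on the correct side of the unit circle throughout the relevant range. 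This is the abstract counterpart of the Rouch\'e argument applied to $F+\lambda F^{\star}$ in the proof of Lemma~\ref{nl1}. Feeding this family into the monotonicity of circular means then yields the stated inequality after a limiting argument and an appeal to continuity.

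The main obstacle is this last construction, not the subharmonicity step. Two points demand real care. First, one must build the family so that the constant produced on the right is \emph{exactly} $\max(|\gamma_0|,|\gamma_n|)$ and not some larger product of the individual weights: naive factorisations of $C_{\boldsymbol\gamma}$ into elementary admissible operators degrade the constant in the wrong direction (the weights multiply, and the maximum of products exceeds the product of maxima only in our favour), so the sharp value must be tracked through whatever decomposition or direct construction one adopts. Second, one must verify the non-vanishing hypotheses that keep $G$ finite and genuinely subharmonic along the parameter circle, which is precisely where admissibility is indispensable. Once these are secured, the convexity of $\psi$ does all remaining analytic work, and no separate treatment of the small-$p$ or $p=0$ cases is needed.
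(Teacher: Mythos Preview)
The paper does not prove this lemma at all: it is quoted as a known result of Arestov, with the explicit attribution ``The result of Arestov \cite[Theorem 2]{2} may now be stated as follows,'' and no argument is supplied. So there is nothing to compare your attempt against; the paper's ``proof'' is a citation.

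As to your sketch itself: the broad strategy---normalise to $c(\boldsymbol\gamma)=1$, embed $C_{\boldsymbol\gamma}P$ in an analytic one-parameter family, use that $\psi(\log|\cdot|)$ preserves subharmonicity, and compare via monotonicity of circular means---is indeed the flavour of Arestov's argument. But you have not actually carried out the decisive step, and you say so yourself: the choice of $A,B$ (equivalently, the construction of the deformation linking $C_{\boldsymbol\gamma}P$ to a polynomial of modulus $c(\boldsymbol\gamma)|P|$ on the circle, with admissibility controlling the zeros throughout) is left as ``the main obstacle.'' Without that construction made explicit, together with the verification that the constant coming out is exactly $\max(|\gamma_0|,|\gamma_n|)$ rather than something worse, what you have written is a plan rather than a proof. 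If the intent is merely to use the lemma, then citing \cite{2} as the paper does is the honest course; if the intent is to reprove it, the missing construction must be supplied.
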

In particular Lemma \ref{l3} applies with $\varphi : x\mapsto x^{p}$ for every $ p \in(0,\infty)$ and with $\varphi : x\mapsto \log x$ as well. Therefore, we have for $0 \leq p <\infty$,
\begin{equation}\label{l3e}
 \left\{\int_{0}^{2\pi}\left|C_{\boldsymbol\gamma}P(e^{i\theta})\right|^{p}d\theta\right\}^{1/p}\leq c(\boldsymbol{\gamma})\left\{\int_{0}^{2\pi}\left|P(e^{i\theta})\right|^{p}d\theta\right\}^{1/p}.
\end{equation}

\begin{lemma}\label{l2}
If $P\in\mathcal{P}_n$ and $P(z)=\sum_{j=0}^{n}a_jz^j$ does not vanish for $|z|<1,$ then for  $k=0,1,\ldots,n-1,$ $\phi$ real and each $p>0,$
\begin{align*}\nonumber
\int_{0}^{2\pi}&\left|\left(a_ne^{in\theta}+\frac{a_k}{\binom{n}{k}}e^{ik\theta}\right)e^{i\phi}+\left(\frac{a_{n-k}}{\binom{n}{k}}e^{i(n-k)\theta}+a_0\right)\right|^{p}d\theta\\&\qquad\qquad\qquad\qquad\qquad\leq\Lambda^p\int_{0}^{2\pi}|P(e^{i\theta})|^pd\theta
\end{align*}
where $k=0,1,\ldots,n-1$ and
$\Lambda=
\begin{cases}
|1+e^{i\phi}|\qquad if\quad k=0\\
\qquad 1\quad \qquad if \quad 0<k<n
\end{cases}
.$
\end{lemma}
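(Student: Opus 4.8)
The plan is to recognize the integrand as $\bigl|C_{\boldsymbol\gamma}P(e^{i\theta})\bigr|^{p}$ for a suitable multiplier vector $\boldsymbol\gamma$ and then to invoke Arestov's inequality \eqref{l3e}. Essentially all of the work is in choosing $\boldsymbol\gamma$ and verifying that $C_{\boldsymbol\gamma}$ is admissible; once that is settled, \eqref{l3e} delivers the bound with constant $c(\boldsymbol\gamma)=\max(|\gamma_0|,|\gamma_n|)$, which I will identify with $\Lambda$. The hypothesis that $P$ has no zero in $|z|<1$ is in fact not needed for this particular lemma, since admissibility is a property of the operator alone; it is retained only because that is the setting in which the lemma is applied.

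First I would set $z=e^{i\theta}$ and expand the bracket as
\[
e^{i\phi}\Bigl(a_nz^{n}+\tfrac{a_k}{\binom{n}{k}}z^{k}\Bigr)+\Bigl(\tfrac{a_{n-k}}{\binom{n}{k}}z^{n-k}+a_0\Bigr).
\]
Collecting powers of $z$, this is exactly $C_{\boldsymbol\gamma}P(z)=\sum_{j=0}^{n}\gamma_j a_j z^{j}$ with $\gamma_0=1$, $\gamma_n=e^{i\phi}$, $\gamma_k=e^{i\phi}/\binom{n}{k}$, $\gamma_{n-k}=1/\binom{n}{k}$, and $\gamma_j=0$ otherwise, in the generic case $0<k<n$ with $2k\neq n$. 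When $2k=n$ the two middle terms merge into $\gamma_k=(1+e^{i\phi})/\binom{n}{k}$, and when $k=0$ all four indices collapse, so that the bracket reduces to $(1+e^{i\phi})(a_nz^{n}+a_0)$.

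The crux is the admissibility of $C_{\boldsymbol\gamma}$, which I would establish through its associated polynomial $G(z)=\sum_{j=0}^{n}\binom{n}{j}\gamma_j z^{j}$. For $0<k<n$ the binomial coefficients cancel, using $\binom{n}{n-k}=\binom{n}{k}$, to give $G(z)=1+e^{i\phi}z^{k}+z^{n-k}+e^{i\phi}z^{n}$, which factors as
\[
G(z)=(1+z^{n-k})(1+e^{i\phi}z^{k}).
\]
Both factors have all their zeros on $|z|=1$, since $z^{n-k}=-1$ and $z^{k}=-e^{-i\phi}$ each force $|z|=1$; hence $G$ has all its zeros in $|z|\le1$. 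By the Szeg\"{o} composition theorem, the coefficientwise product of two polynomials with all zeros in $|z|\le1$ again has all its zeros there, so $C_{\boldsymbol\gamma}$ preserves property (i) and is therefore admissible. For $k=0$ the associated polynomial is $(1+e^{i\phi})(1+z^{n})$, again with all zeros on $|z|=1$.

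It then remains only to read off the constant. For $0<k<n$ we have $|\gamma_0|=1$ and $|\gamma_n|=|e^{i\phi}|=1$, so $c(\boldsymbol\gamma)=1=\Lambda$; for $k=0$ we have $\gamma_0=\gamma_n=1+e^{i\phi}$, so $c(\boldsymbol\gamma)=|1+e^{i\phi}|=\Lambda$. Taking $\varphi(x)=x^{p}$ in \eqref{l3e} and raising to the $p$-th power then yields the assertion. The main obstacle, as anticipated, is the admissibility step: everything hinges on spotting the factorization of $G(z)$, after which the zero-location claim, and hence the applicability of Lemma \ref{l3}, is immediate.
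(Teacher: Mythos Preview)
Your argument is correct and takes a genuinely different route from the paper's. The paper does \emph{use} the hypothesis on $P$: it first shows, via Rouch\'e's theorem and Vi\`ete's formulas applied to $P+\lambda P^{\star}$, the pointwise inequality
\[
\Bigl|a_nz^{n}+\tfrac{a_k}{\binom{n}{k}}z^{k}\Bigr|\le\Bigl|\tfrac{a_{n-k}}{\binom{n}{k}}z^{n-k}+a_0\Bigr|\qquad(|z|\le1),
\]
and then applies Rouch\'e once more to conclude that $C_{\boldsymbol\gamma}P$ has all zeros in $|z|\ge1$ whenever $P$ does; this verifies admissibility of type~(ii). You instead bypass the pointwise inequality altogether: you form the associated polynomial $G(z)=\sum_j\binom{n}{j}\gamma_jz^{j}$, observe the factorization $G(z)=(1+z^{n-k})(1+e^{i\phi}z^{k})$, and invoke the Schur--Szeg\H{o} composition theorem to get admissibility of type~(i). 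Your route is shorter, makes clear that the lemma holds for every $P\in\mathcal P_n$ (not only those with no zeros in $|z|<1$), and isolates the key algebraic fact in a single factorization; the paper's approach, on the other hand, is self-contained in that it does not appeal to the Szeg\H{o} theorem as an external ingredient, and the intermediate pointwise estimate it produces is exactly the $\rho=1$ case of the paper's Lemma~\ref{nl1}, so that step is not wasted effort in the larger scheme.
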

\begin{proof}
By hypothesis $P(z)$ has all its zeros in $|z|\geq 1,$ therefore all the zeros of $P^{\star}(z)=z^n\overline{P(1/\overline{z})}$ lie in $|z|\leq 1$ and $|P(z)|=|P^{\star}(z)|$ for $|z|=1.$ Therefore $P^{\star}(z)/P(z) $ is analytic in $|z|\leq 1.$ By maximum modulus principle $|P^{\star}(z)|\leq |P(z)|$ for $|z|\leq 1$ or equivalently $|P(z)|\leq |P^{\star}(z)|$ for $|z|\geq 1.$ An application of Rouch\'{e}'s theorem shows that all the zeros of polynomial $$P(z)+\lambda P^{\star}(z)=\sum_{j=0}^{n}(a_{j}+\lambda \overline{a}_{n-j})z^{j}$$ lie in $|z|\leq 1$ for every $\lambda\in\mathbb{C}$ with $|\lambda|> 1.$ If $z_1,z_2,\ldots,z_n$ are roots of $P(z)+\lambda P^{\star}(z),$ then $|z_j|\leq 1,$ $j=1,2,\ldots,n$ and we have by Vi\`{e}te's formula for $k=0,1,\ldots,n-1,$
\begin{align*}
(-1)^{n-k}\left(\frac{a_k+\lambda \bar{a}_{n-k}}{a_n+\lambda \bar{a}_0}\right)=\sum\limits_{1\leq i_{1}< i_{2}\ldots<i_{n-k}\leq n}z_{i_{1}}z_{i_{2}}\ldots z_{i_{n-k}},
\end{align*}
which gives
\begin{align}\label{l1eq1}
\left|\frac{a_k+\lambda \bar{a}_{n-k}}{a_n+\lambda \bar{a}_0}\right|\leq\sum\limits_{1\leq i_{1}< i_{2}\ldots<i_{n-k}\leq n}\left|z_{i_{1}}z_{i_{2}}\ldots z_{i_{n-k}}\right|\leq \binom{n}{n-k}=\binom{n}{k}.
\end{align}
Therefore, all the zeros of polynomial 
\begin{align*}
M(z)=(a_n+\lambda \bar{a}_0)z^n+\frac{a_k+\lambda \bar{a}_{n-k}}{\binom{n}{k}}z^k=a_nz^n+\frac{a_k}{\binom{n}{k}}z^k+\lambda\left(\bar{a}_0z^n+\frac{\bar{a}_{n-k}}{\binom{n}{k}}z^k\right)
\end{align*}
lie in $|z|\leq 1$ for $\lambda\in\mathbb{C}$ with $|\lambda|>1$ and hence for $r>1,$ the polynomial
\begin{align*}
M(rz)=a_n(rz)^n+\frac{a_k}{\binom{n}{k}}(rz)^k+\lambda\left(\bar{a}_0(rz)^n+\frac{\bar{a}_{n-k}}{\binom{n}{k}}(rz)^k\right)
\end{align*}
has all its zeros in $|z|<1.$ This implies 
\begin{align}\label{l1eq2}
\left|a_n(rz)^n+\frac{a_k}{\binom{n}{k}}(rz)^k\right|\leq \left|\bar{a}_0(rz)^n+\frac{\bar{a}_{n-k}}{\binom{n}{k}}(rz)^k\right|
\end{align}
for $|z|\geq 1.$ Indeed, if inequality \eqref{l1eq2} is not true, then there exists a point $w$ with $|w|\geq 1$ such that 
\begin{align*}
\left|a_n(rw)^{n}+\frac{a_k}{\binom{n}{k}}(rw)^{k}\right|> \left|\bar{a}_0(rw)^{n}+\frac{\bar{a}_{n-k}}{\binom{n}{k}}(rw)^{k}\right|.
\end{align*}
Since all the zeros of $P^{\star}(z)$ lie in $|z|\leq 1$ then by the similar argument as in  \eqref{l1eq1}, we have $|\bar{a}_0|\geq |\bar{a}_{n-k}|/\binom{n}{k},$ which implies that $\bar{a_0}(rw)^{n}+\bar{a}_{n-k}/\binom{n}{k}(rw)^{k}\neq 0.$ We take
$$\lambda=-\frac{a_n(rw)^{n}+\frac{a_k}{\binom{n}{k}}(rw)^{k}}{\bar{a}_0(rw)^{n}+\frac{\bar{a}_{n-k}}{\binom{n}{k}}(rw)^{k}}, $$
then $\lambda$ is a well defined complex number with $|\lambda|>1$ and with this choice of $\lambda,$ we obtain $M(rw)=0$ where $|w|\geq 1.$ This contradicts the fact that all the zeros of $M(rz)$ lie in $|z|<1.$ Thus \eqref{l1eq2} holds. Letting $r\rightarrow 1$ in \eqref{l1eq2} and using continuity, we obtain
 \begin{align}\label{l1eq2ff}
\left|a_nz^n+\frac{a_k}{\binom{n}{k}}z^k\right|\leq \left|\bar{a}_0z^n+\frac{\bar{a}_{n-k}}{\binom{n}{k}}z^k\right|=\left|\frac{a_{n-k}}{\binom{n}{k}}z^{n-k}+a_0\right|
\end{align}
for $|z|= 1.$ Again, since $|\bar{a}_0|\geq |\bar{a}_{n-k}|/\binom{n}{k}$ then the polynomial $\frac{a_{n-k}}{\binom{n}{k}}z^{n-k}+a_0$ does not vanish in $|z|<1.$ Hence by the maximum modulus principle, 
 \begin{align*}
 \left|a_nz^n+\frac{a_k}{\binom{n}{k}}z^k\right|<\left|\frac{a_{n-k}}{\binom{n}{k}}z^{n-k}+a_0\right|\quad \text{for}\quad |z|<1.
 \end{align*}
 A direct application of Rouch\'{e}'s theorem shows that with $P(z)=a_{n}z^{n}+\cdots+a_{0},$
 \begin{align*}
 C_{\boldsymbol\gamma}P(z)=\left(a_nz^n+\frac{a_k}{\binom{n}{k}}z^k\right)e^{i\phi}+\left(\frac{a_{n-k}}{\binom{n}{k}}z^{n-k}+a_0\right)
 \end{align*}
 has all its zeros in $|z|\geq 1.$ Therefore, $C_{\boldsymbol\gamma}$ is an admissible operator. Applying \eqref{l3e} of Lemma \ref{l3}, the desired result follows immediately for each $p > 0$.
\end{proof}
The next lemma can be found in \cite{sg}.
\begin{lemma}\label{sl1}
Let $\alpha,\beta\in\mathbb{C}$ then for each $p>0,$
\begin{align}\label{sl1e}
\left\|\alpha z+\beta \right\|_p\geq \frac{|\alpha|+|\beta|}{2}\left\|1+z\right\|_p.
\end{align}
\end{lemma}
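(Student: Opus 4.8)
The plan is to reduce the inequality to a purely pointwise estimate on the unit circle. The first observation is that the quantity $\|\alpha z+\beta\|_p$ depends only on the moduli $|\alpha|$ and $|\beta|$. Writing $\alpha=|\alpha|e^{i\mu}$ and $\beta=|\beta|e^{i\nu}$, one has $|\alpha e^{i\theta}+\beta|=\big||\alpha|e^{i(\theta+\mu-\nu)}+|\beta|\big|$, so after the substitution $\psi=\theta+\mu-\nu$ — which leaves the integral over a full period unchanged — we obtain $\|\alpha z+\beta\|_p=\big\||\alpha|z+|\beta|\big\|_p$. Hence it suffices to prove the inequality for nonnegative reals $a=|\alpha|$ and $b=|\beta|$, namely $\|az+b\|_p\ge\frac{a+b}{2}\|1+z\|_p$.

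For this reduced form I would establish the pointwise inequality
\[
|ae^{i\theta}+b|\ \ge\ \frac{a+b}{2}\,|1+e^{i\theta}|\qquad(\theta\in\mathbb{R},\ a,b\ge 0).
\]
Since both sides are nonnegative, this is equivalent to the inequality between their squares; using $|ae^{i\theta}+b|^2=a^2+b^2+2ab\cos\theta$ and $|1+e^{i\theta}|^2=2+2\cos\theta$, a short computation gives
\[
|ae^{i\theta}+b|^2-\Big(\tfrac{a+b}{2}\Big)^2|1+e^{i\theta}|^2=\frac{(a-b)^2}{2}\,(1-\cos\theta)\ \ge\ 0,
\]
which establishes the pointwise bound.

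With the pointwise inequality in hand, raising both sides to the power $p>0$ (the map $x\mapsto x^p$ is monotone on $[0,\infty)$) and averaging over $\theta\in[0,2\pi)$ yields $\|az+b\|_p^p\ge\big(\tfrac{a+b}{2}\big)^p\|1+z\|_p^p$; taking $p$-th roots and combining with the phase reduction of the first paragraph finishes the proof.

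The attractive feature of this route is that it handles all $p>0$ at once, with no need to split into the ranges $p\ge 1$ and $0<p<1$; a triangle-inequality argument on the $L^p$ norm would cover $p\ge 1$ cleanly but would degrade the constant $\tfrac{1}{2}$ when $0<p<1$, where the norm is only a quasi-norm. The single point that genuinely requires care — and so is the natural candidate for the main obstacle — is that the pointwise inequality is \emph{false} for general complex $\alpha,\beta$ (it already fails at $\theta=0$ when $\alpha=1,\ \beta=-1$). Thus the passage from arbitrary complex coefficients to nonnegative reals via the rotation invariance of the norm is not a cosmetic simplification but an essential first step, and I would make sure to record it explicitly before invoking the pointwise estimate.
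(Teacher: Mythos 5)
Your proof is correct and complete: the rotation-invariance reduction to nonnegative $a,b$ is valid, the squared-modulus computation $|ae^{i\theta}+b|^2-\bigl(\tfrac{a+b}{2}\bigr)^2|1+e^{i\theta}|^2=\tfrac{(a-b)^2}{2}(1-\cos\theta)\geq 0$ checks out, and monotonicity of $x\mapsto x^p$ on $[0,\infty)$ then transfers the pointwise bound to the $L^p$ mean for every $p>0$ uniformly. The paper itself offers no proof of this lemma --- it only cites \cite{sg} --- so there is nothing to diverge from; your argument is the natural self-contained one (and matches the standard proof in the literature), and you are right to flag the reduction to nonnegative coefficients as essential rather than cosmetic, since the pointwise inequality fails for general complex $\alpha,\beta$.
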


\section{Proof of Theorem \ref{t3}}

\begin{proof}[Proof of Theorem \ref{t3}]
Since $P(z)=\sum_{j=0}^{n}a_jz^j$ does not vanish in $|z|<\rho,$ $\rho\geq 1$ by Lemma \ref{nl1} for $s=0,1,2,\ldots, n-1,$ we have
\begin{align}\label{t2p1}
\rho^s\left|a_ne^{in\theta}+\frac{a_s}{\binom{n}{s}}z^{is\theta}\right|\leq \left|\frac{a_{n-s}}{\binom{n}{s}}e^{i(n-s)\theta}+a_0\right|.
\end{align}
Also, by Lemma \ref{l2},
\begin{align}\label{t2p2}
\int_{0}^{2\pi}\left|G(\theta)+e^{i\phi}Q(\theta)\right|^p\diff\theta\leq \Lambda^p\int_{0}^{2\pi}|P(e^{i\theta})|^p\diff\theta,
\end{align}
where 
$G(\theta)=a_ne^{in\theta}+\dfrac{a_s}{\binom{n}{s}}e^{is\theta}$ and $Q(\theta)=\dfrac{a_{n-s}}{\binom{n}{s}}e^{i(n-s)\theta}+a_0.$

Integrating both sides of \eqref{t2p2} with respect to $\phi$ from $0$ to $2\pi,$ we get for each $p>0$ 
\begin{align}\label{t2p3}
\int_{0}^{2\pi}\int_{0}^{2\pi}\left|G(\theta)+e^{i\phi}Q(\theta)\right|^p\diff\phi \diff\theta\leq \int_{0}^{2\pi}\Lambda^p\diff\phi\int_{0}^{2\pi}|P(e^{i\theta})|^p\diff\theta.
\end{align}
Now for $t_{1}\geq t_{2}\geq 1$ and $p>0,$ we have
$$ \int_{0}^{2\pi}|t_{1}+e^{i\phi}|^p\diff\phi \geq \int_{0}^{2\pi}|t_{2}+e^{i\phi}|^p\diff\phi.$$
If $G(\theta)\neq 0,$ we take $t_{1}=|Q(\theta)|/|G(\theta)|$ and $t_{2}=\rho^s$ then by \eqref{t2p1} $t_{1}\geq t_{2}$ and we get
\begin{align*}
\int_{0}^{2\pi}\left|G(\theta)+e^{i\phi}Q(\theta)\right|^p\diff\phi=&|G(\theta)|^p\int_{0}^{2\pi}\left|1+e^{i\phi}\frac{Q(\theta)}{G(\theta)}\right|^p\diff\phi\\=&|G(\theta)|^p\int_{0}^{2\pi}\left|e^{i\phi}+\left|\frac{Q(\theta)}{G(\theta)}\right|\right|^p\diff\phi\\&\geq |G(\theta)|^p \int_{0}^{2\pi}|\rho^s+e^{i\phi}|^p\diff\phi.
\end{align*}
For $G(\theta)=0,$ this inequality is trivially true. Using this in \eqref{t2p3}, we conclude that
\begin{align*}
\int_{0}^{2\pi}|G(\theta)|^p\diff\theta \int_{0}^{2\pi}|\rho^s+e^{i\phi}|^p\diff\phi\leq\int_{0}^{2\pi}\Lambda^p\diff\phi\int_{0}^{2\pi}|P(e^{i\theta})|^p\diff\theta.
\end{align*}
This implies
\begin{align*}
&\left\{\frac{1}{2\pi}\int_{0}^{2\pi}\left|a_ne^{in\theta}+\frac{a_s}{\binom{n}{s}}e^{is\theta}\right|^{p}\diff\theta\right\}^{1/p} \\ &\qquad\qquad\qquad\qquad\leq\frac{\left\{\frac{1}{2\pi}\int_{0}^{2\pi}\Lambda^p\diff\phi\right\}^{1/p}\left\{\frac{1}{2\pi}\int_{0}^{2\pi}\left|P(e^{i\theta})\right|^{p}\diff\theta\right\}^{1/p}}{\left\{\frac{1}{2\pi}\int_{0}^{2\pi}|\rho^s+e^{i\phi}|^p\diff\phi\right\}^{1/p}}. 
\end{align*}
which in conjunction with the fact that $\|az^{n}+b\|_p=\|az+b\|_p$ gives
\begin{align*}
\left\|a_nz+\frac{a_s}{\binom{n}{s}}\right\|_p\leq c_p\left\|1+z\right\|_p \frac{\left\|P\right\|_p}{\left\|z+\rho^s\right\|_p}
\end{align*}
where $ c_p$ is given by \eqref{cp}. This proves Theorem \ref{t3} for $p>0.$ To obtain this result for $p=0,$ we simply make $p\rightarrow 0+.$
\end{proof}
 
\end{document}